\font\smallit=cmti10
\font\smalltt=cmtt10
\renewcommand\section{\@startsection {section}{1}{\z@}
{-30pt \@plus -1ex \@minus -.2ex}
{2.3ex \@plus.2ex}
{\normalfont\normalsize\bfseries\boldmath}}
\renewcommand\subsection{\@startsection{subsection}{2}{\z@}
{-3.25ex\@plus -1ex \@minus -.2ex}
{1.5ex \@plus .2ex}
{\normalfont\normalsize\bfseries\boldmath}}
\renewcommand{\@seccntformat}[1]{\csname the#1\endcsname. }
\newtheorem{theorem}{Theorem}
\newtheorem{lemma}{Lemma}
\theoremstyle{definition}
\newtheorem{defn}{Definition}[section]
\newtheorem{rem}{Remark}[section]
\newtheorem{pict}{Figure}[section]
\begin{document}

\begin{center}
\uppercase{\bf Restricted Nim with a Pass}
\vskip 20pt
{\bf Ryohei Miyadera }\\
{\smallit Keimei Gakuin Junior and High School, Kobe City, Japan}. \\
{\tt runnerskg@gmail.com}
\vskip 10pt
{\bf Hikaru Manabe}. \\
{\smallit Keimei Gakuin Junior and High School, Kobe City, Japan}. \\
{\tt urakihebanam@gmail.com}

\vskip 10pt
%{\bf Taishi Aono}\\
%{\smallit Keimei Gakuin Junior and High School, Kobe City, Japan}\\
%{\tt runnerskg2@gmail.com}

\end{center}
\vskip 20pt
\centerline{\smallit Received: , Revised: , Accepted: , Published: } % We will fill in the dates
\vskip 30pt

\pagestyle{myheadings} 
\markright{\smalltt INTEGERS: 19 (2019)\hfill} 
\thispagestyle{empty} 
\baselineskip=12.875pt 
\vskip 30pt

\centerline{\bf Abstract}
\noindent
This paper presents a study of restricted Nim with a pass. In the restricted Nim considered in this study, two players take turns and remove stones from the piles. In each turn, when the number of stones is $m$, each player is allowed to remove at least one stone and at most $\lceil \frac{m}{2}  \rceil$ stones from a pile of $m$ stones. The standard rules of the game are modified to allow a one-time pass, that is, a pass move that may be used at most once in the game and not from a terminal position. Once a pass has been used by either player, it is no longer available. It is well-known that in classical Nim, the introduction of the pass alters the underlying structure of the game, significantly increasing its complexity.
In the restricted Nim considered in this study, the pass move was found to have a minimal impact. There is a simple relationship between the Grundy numbers of restricted Nim and the Grundy numbers of restricted Nim with a pass, where the number of piles can be any natural number. Therefore, the authors address a longstanding open question in combinatorial game theory:
the extent to which the introduction of a pass into a game affects its behavior. The game that we developed appears to be the first variant of Nim that is fully solvable when a pass is not allowed and remains fully solvable following the introduction of a pass move.
 \pagestyle{myheadings} 
 \markright{\smalltt \hfill} 
 \thispagestyle{empty} 
 \baselineskip=12.875pt 
 \vskip 30pt
 
\section{Introduction}
In this study, restricted Nim and restricted Nim with a pass are examined. An interesting but difficult question in combinatorial game theory has been to determine what happens when standard game rules are modified to allow a one-time pass, that is, a pass move that may be used at most once in the game and not from a terminal position. Once a pass has been used by either player, it is no longer available. In the case of classical Nim, the introduction of the pass alters the mathematical structure of the game, considerably increasing its complexity. The effect of a pass on classical Nim remains an important open question that has defied traditional approaches. The late mathematician David Gale offered a monetary prize to the first person to develop a solution for three-pile classical Nim with a pass.

In \cite{nimpass} (p. 370), Friedman and Landsberg conjectured that ”solvable combinatorial games are structurally unstable to perturbations, while generic, complex games will be structurally stable.” One way to introduce such a perturbation is to allow a pass. 
One of the authors of the present article reported a counterexample to this conjecture in \cite{integers1}. The game used in \cite{integers1} is solvable because there is a simple formula for the Grundy numbers, and even when we introduce a pass move to the game, there is a simple formula for $\mathcal{P}$-positions.

The restricted Nim considered in the present study is of the same type, but the introduction of a pass move has a minimal impact. There is a simple relationship between the Grundy numbers of the game and the Grundy numbers of the game with a pass move, and the number of piles can be any natural number. This result is stated in Theorem \ref{grundytwopilepass} of the present article.
One of the authors discussed part of the result for one-pile restricted Nim with a pass in \cite{jcdcg2018a}.

Let $Z_{\ge 0}$ and $N$ be sets of non-negative numbers and natural numbers, respectively.
For completeness, we briefly review some of the necessary concepts of combinatorial game theory. Details are presented in $\cite{lesson}$ and $\cite{combysiegel}$. 
\begin{defn}\label{definitionfonimsum11}
	Let $x$ and $y$ be non-negative integers. They are expressed in Base 2 as follows: 
$x = \sum_{i=0}^n x_i 2^i$ and $y = \sum_{i=0}^n y_i 2^i$, with $x_i,y_i \in \{0,1\}$.
	We define \textit{nim-sum} $x \oplus y$ as follows:
\begin{equation}
	x \oplus y = \sum\limits_{i = 0}^n {{w_i}} {2^i},
\end{equation}
	where $w_{i}=x_{i}+y_{i} \ (\bmod\ 2)$.
\end{defn}

For impartial games without drawings, there are only two outcome classes.
\begin{defn}\label{NPpositions}
	$(a)$ A position is referred to as a $\mathcal{P}$-\textit{position} if it is a winning position for the previous player (the player who has just moved), as long as he/she plays correctly at every stage. \\
	$(b)$ A position is referred to as an $\mathcal{N}$-\textit{position} if it is a winning position for the next player as long as he/she plays correctly at every stage.
\end{defn}

\begin{defn}\label{defofmexgrundy}
$(i)$ For any position $\mathbf{p}$ of game $\mathbf{G}$, there is a set of positions that can be reached by precisely one move in $\mathbf{G}$, which we denote as \textit{move}$(\mathbf{p})$. \\	
$(ii)$ The \textit{minimum excluded value} $(\textit{mex})$ of a set $S$ of non-negative integers is the smallest non-negative integer that is not in S. \\
$(iii)$ Let $\mathbf{p}$ be the position of an impartial game. The associated Grundy number is denoted as $G(\mathbf{p})$ and is recursively defined as follows:
$\mathcal{G}(\mathbf{p}) = \textit{mex}(\{\mathcal{G}(\mathbf{h}): \mathbf{h} \in move(\mathbf{p})\}).$
\end{defn}
%$(i)$を校正後に追加した。ただし、この文章は別の論文で校正済み	
\begin{defn}\label{sumofgames}
	The \textit{disjunctive sum} of the two games, which is denoted as $\mathbf{G}+\mathbf{H}$, is a supergame in which a player may move in either $\mathbf{G}$ or $\mathbf{H}$ but not both.
\end{defn}

\begin{theorem}\label{thofsumofgame}
Let $\mathbf{G}$ and $\mathbf{H}$ be impartial rulesets and $G_{\mathbf{G}}$ and $G_{\mathbf{H}}$ be the Grundy numbers of position $\mathbf{g}$ played under the rules of $\mathbf{G}$ and position $\mathbf{h}$ played under the rules of $\mathbf{H}$, respectively. Thus, we have the following:\\
	$(i)$ For any position $\mathbf{g}$ of $\mathbf{G}$, 
	$G_{\mathbf{G}}(\mathbf{g})=0$ if and only if $\mathbf{g}$ is the $\mathcal{P}$ position. \\
	$(ii)$ The Grundy number of positions $\{\mathbf{g},\mathbf{h}\}$ in game $\mathbf{G}+\mathbf{H}$ is	$G_{\mathbf{G}}(\mathbf{g})\oplus G_{\mathbf{H}}(\mathbf{h})$.
\end{theorem}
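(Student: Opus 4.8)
The overarching tool is well-founded induction on positions ordered by the move relation: because the games under consideration terminate, every play reaches a terminal position in finitely many steps, so induction down to the terminal positions is legitimate. I would apply it separately in each part, inducting in part $(ii)$ on the positions of the sum game $\mathbf{G}+\mathbf{H}$. The base cases are the terminal positions, and the whole argument is driven by the $\operatorname{mex}$ definition of the Grundy number together with the elementary bit arithmetic of $\oplus$.

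For part $(i)$, the base case is a terminal position $\mathbf{g}$: it has no options, so $G_{\mathbf{G}}(\mathbf{g})=\operatorname{mex}(\emptyset)=0$, and since the player to move cannot move and loses, $\mathbf{g}$ is a $\mathcal{P}$-position. For the inductive step I would split on whether $G_{\mathbf{G}}(\mathbf{g})$ vanishes. If $G_{\mathbf{G}}(\mathbf{g})=0$, then by the definition of $\operatorname{mex}$ no option of $\mathbf{g}$ has Grundy value $0$, so by the inductive hypothesis every option is an $\mathcal{N}$-position; every move therefore hands the opponent a winning position, making $\mathbf{g}$ a $\mathcal{P}$-position. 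Conversely, if $G_{\mathbf{G}}(\mathbf{g})\neq 0$, then $0$ belongs to the set of Grundy values of the options, so some option is a $\mathcal{P}$-position by the inductive hypothesis, and moving to it wins, making $\mathbf{g}$ an $\mathcal{N}$-position.

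For part $(ii)$, I would set $a=G_{\mathbf{G}}(\mathbf{g})$, $b=G_{\mathbf{H}}(\mathbf{h})$, and $c=a\oplus b$, and then prove $G_{\mathbf{G}+\mathbf{H}}(\mathbf{g},\mathbf{h})=c$ straight from the $\operatorname{mex}$ definition by establishing two facts: that no option of $(\mathbf{g},\mathbf{h})$ has Grundy value $c$, and that every $d$ with $0\le d<c$ occurs as the Grundy value of some option. The first fact is the easier one. Any option of $(\mathbf{g},\mathbf{h})$ has the form $(\mathbf{g}',\mathbf{h})$ or $(\mathbf{g},\mathbf{h}')$, and by the inductive hypothesis its Grundy value is $G_{\mathbf{G}}(\mathbf{g}')\oplus b$ or $a\oplus G_{\mathbf{H}}(\mathbf{h}')$, respectively; setting either equal to $c=a\oplus b$ and cancelling forces $G_{\mathbf{G}}(\mathbf{g}')=a$ or $G_{\mathbf{H}}(\mathbf{h}')=b$, which contradicts the fact that, by the $\operatorname{mex}$ definitions of $a$ and $b$, no option of $\mathbf{g}$ takes the value $a$ and no option of $\mathbf{h}$ takes the value $b$.

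The second fact is where the main work lies, and it is the step I expect to be the principal obstacle. Given $d<c$, I would isolate the most significant bit in which $c$ and $d$ differ; since $d<c$, this bit equals $1$ in $c$ and $0$ in $d$, and because it is set in $c=a\oplus b$ it is set in exactly one of $a$ and $b$, say in $a$ without loss of generality. Writing $a'=a\oplus(c\oplus d)$, one checks that flipping this leading bit produces $a'<a$, so the $\operatorname{mex}$ definition of $a=G_{\mathbf{G}}(\mathbf{g})$ guarantees a move $\mathbf{g}\to\mathbf{g}'$ with $G_{\mathbf{G}}(\mathbf{g}')=a'$. The inductive hypothesis then yields $G_{\mathbf{G}+\mathbf{H}}(\mathbf{g}',\mathbf{h})=a'\oplus b=(a\oplus b)\oplus(c\oplus d)=c\oplus(c\oplus d)=d$, as desired. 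The delicate points are verifying the inequality $a'<a$ directly from the bit pattern and handling the symmetric case in which the distinguished bit lies in $b$; once these are settled, the two facts together give $G_{\mathbf{G}+\mathbf{H}}(\mathbf{g},\mathbf{h})=c$ by the definition of $\operatorname{mex}$, and the remainder is routine bookkeeping with the inductive hypothesis.
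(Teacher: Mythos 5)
Your argument is correct, but there is nothing in the paper to compare it against: the authors do not prove Theorem \ref{thofsumofgame}, which is the classical Sprague--Grundy theorem, and instead refer the reader to \cite{lesson}. What you have written is the standard textbook proof that the citation points to: well-founded induction on positions, the $\textit{mex}$ characterization of $\mathcal{P}$-positions for part $(i)$ (terminal positions get $\textit{mex}(\emptyset)=0$; value $0$ forces all options into $\mathcal{N}$; nonzero value guarantees an option of value $0$), and for part $(ii)$ the two-sided verification that $c=a\oplus b$ is excluded from the option values (cancellation in $\oplus$ plus the fact that no option of $\mathbf{g}$ has value $a$) while every $d<c$ is attained. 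The step you flag as delicate is handled correctly: the most significant bit where $c$ and $d$ differ is set in $c$, hence in exactly one of $a$, $b$, and XOR-ing that summand with $c\oplus d$ clears its leading changed bit and so strictly decreases it, which by the $\textit{mex}$ definition yields the required move; the symmetric case is identical with the roles of $\mathbf{G}$ and $\mathbf{H}$ exchanged. In short, your proposal supplies a complete, self-contained proof where the paper supplies only a citation, and it is the same argument one finds in the cited reference.
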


For the proof of this theorem, see $\cite{lesson}$.
%下のremarkは英文校正後追加した。ただし、この文章は別の論文で校正済み	
\begin{rem}
With  Theorem \ref{thofsumofgame}, we can find a $\mathcal{P}$-position by calculating the Grundy numbers and a $\mathcal{P}$-position of the sum of two games by calculating the Grundy numbers of two games.
Therefore, Grundy numbers are an important research topic in combinatorial game theory.
\end{rem}	
	
\section{Maximum Nim}
In this section, we study maximum Nim, which is a game of restricted Nim.
\begin{defn}\label{defofregular}
 If the sequence $f(m)$ for $m \in Z_{\ge 0}$ satisfies
$0 \leq f(m) -f(m-1) \leq 1$ for any natural number $m$, it is called a “regular sequence.”
\end{defn}

\begin{defn}\label{defofmaxnim}
Let $f(m)$ be a regular sequence:
Suppose that there is a pile of $n$ stones, and two players take turns removing stones from the pile.
In each turn, the player is allowed to remove at least one stone and at most $f(m)$ stones, where $m$ represents the number of stones. The player who removes the last stone is the winner. 
 We refer to $f$ as a “rule sequence.”
\end{defn}

\begin{lemma}\label{lemmabylevinenim} Let $\mathcal{G}$ represent the Grundy number of the maximum Nim with the rule sequence $f(x)$. Then, we have the following properties: \\
$(i)$ If $f(x) = f(x-1)$, $\mathcal{G}(x) = \mathcal{G}(x-f(x)-1)$.\\
$(ii)$ If $f(x) > f(x-1)$, $\mathcal{G}(x) = f(x)$.
\end{lemma}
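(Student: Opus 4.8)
Throughout write $\mathcal{G}=\mathcal{G}$ for the Grundy function and recall that from a pile of $x$ stones a player may move to any of $x-1,x-2,\dots,x-f(x)$, so that
$\mathcal{G}(x)=\operatorname{mex}\{\mathcal{G}(x-1),\mathcal{G}(x-2),\dots,\mathcal{G}(x-f(x))\}$ with $\mathcal{G}(0)=0$.
Two elementary facts from regularity will be used repeatedly: $f$ is nondecreasing and $f(x)\le x$ (so the indices above are nonnegative), and the mex of a set of at most $f(x)$ numbers is at most $f(x)$, giving the free bound $\mathcal{G}(x)\le f(x)$. The plan is \emph{not} to attack $(i)$ and $(ii)$ directly, but to prove a single stronger invariant from which both fall out. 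Concretely, I would carry through the induction the statement
$$\textrm{(INV)}\qquad \{\mathcal{G}(x-f(x)),\,\mathcal{G}(x-f(x)+1),\,\dots,\,\mathcal{G}(x)\}=\{0,1,\dots,f(x)\},$$
i.e. the block of $f(x)+1$ consecutive Grundy values ending at $x$ is a permutation of $\{0,\dots,f(x)\}$. (One checks on small examples, e.g. $f(m)=\lceil m/2\rceil$, that these blocks are indeed always complete.)

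I would then prove (INV) for all $x$ by strong induction, splitting exactly along the two cases of the lemma and observing how the length-$(f(x)+1)$ block ending at $x$ is obtained from the block ending at $x-1$. \emph{Case $(ii)$, $f(x)=f(x-1)+1=:k+1$.} Here the block for $x-1$ guaranteed by the inductive hypothesis (INV)$(x-1)$ occupies positions $x-1-k,\dots,x-1$ with value set $\{0,\dots,k\}$, and these are precisely the positions $x-f(x),\dots,x-1$ reachable from $x$. Hence $\mathcal{G}(x)=\operatorname{mex}\{0,1,\dots,k\}=k+1=f(x)$, which is $(ii)$; adjoining this new value re-establishes (INV)$(x)$ since $\{0,\dots,k\}\cup\{k+1\}=\{0,\dots,f(x)\}$. \emph{Case $(i)$, $f(x)=f(x-1)=:k$.} Now (INV)$(x-1)$ gives a complete block on positions $x-1-k,\dots,x-1$, while the positions reachable from $x$ are $x-k,\dots,x-1$, which is that same block with its leftmost entry $x-1-k=x-f(x)-1$ deleted. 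Deleting one position removes exactly one value, so the reachable values are $\{0,\dots,k\}\setminus\{\mathcal{G}(x-f(x)-1)\}$, whose mex is the single missing value $\mathcal{G}(x-f(x)-1)$. This is $(i)$, and putting the dropped value back (as $\mathcal{G}(x)$) restores (INV)$(x)$.

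The step I expect to be the real obstacle is not any individual calculation but \emph{isolating the invariant} (INV): the naive attempt to show directly in case $(ii)$ that the reachable window covers $\{0,\dots,f(x)-1\}$ stalls, because one needs to know where each intermediate value sits, and the windows can be long when $f$ has long constant stretches. Recognizing that the right bookkeeping object is the full block of size $f(x)+1$ ending at $x$ — which slides by one position as $x$ increments, either gaining a value (at a jump) or trading its leftmost value for a new rightmost one (on a plateau) — is what makes both parts collapse to a one-line mex computation. The remaining care is routine: verifying the base case $x=0$ (where (INV) reads $\{\mathcal{G}(0)\}=\{0\}$) and checking that the boundary indices stay nonnegative, including the degenerate possibility $f(1)=f(0)=0$, which the constant-case formula handles correctly via $\mathcal{G}(1)=\mathcal{G}(0)=0$.
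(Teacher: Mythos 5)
Your proposal is correct, but note that the paper itself offers no argument for this lemma: its ``proof'' is a one-line citation to Lemma 2.1 of Levine's \emph{Fractal sequences and restricted Nim}, so there is no in-paper route to compare against. What you supply is a complete, self-contained derivation, and the sliding-window invariant you isolate --- that $\{\mathcal{G}(x-f(x)),\dots,\mathcal{G}(x)\}=\{0,\dots,f(x)\}$ --- is exactly the right bookkeeping object; it is essentially the argument in Levine's paper. The two case analyses check out: at a jump the reachable window coincides with the previous complete block, giving $\mathcal{G}(x)=\operatorname{mex}\{0,\dots,f(x)-1\}=f(x)$; on a plateau the window is the previous block with its leftmost entry deleted, and since that block has $f(x-1)+1$ positions carrying the $f(x-1)+1$ distinct values $0,\dots,f(x-1)$ (a point you use implicitly and could state explicitly), deleting one position removes exactly the value $\mathcal{G}(x-f(x)-1)$, which is then the mex. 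Your handling of the degenerate plateau $f(x)=f(x-1)=0$ is also right. The only pedantic caveat: the bound $f(x)\le x$, needed for the indices to be nonnegative, requires $f(0)=0$ in addition to regularity; the paper's definition of a regular sequence does not state this, though it holds for the rule sequence $\lceil x/2\rceil$ actually used. Your write-up has the advantage of making the paper self-contained where it currently is not.
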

\begin{proof}
Properties $(i)$ and $(ii)$ are proven in Lemma 2.1 of \cite{levinenim}.
\end{proof}

\subsection{Maximum Nim Whose Rule Sequence is $f(x) = \lceil \frac{x}{2}  \rceil$}\label{sectionforceil}
In this section, we let $f(x) = \lceil \frac{x}{2}  \rceil$. Because $0 \leq f(m) -f(m-1) \leq 1$ for any $m \in N$, $f(m)$ for $m \in Z_{\ge 0}$ is a regular sequence.
Here, we examine the maximum Nim of Definition \ref{defofmaxnim} for $f(x)$.

Another option is to use $f(x) = \lfloor \frac{x}{2}\rfloor $; however, this case produces almost the same result because $ \lfloor \frac{x+1}{2}\rfloor  = \lceil \frac{x}{2}  \rceil$ for any $n \in Z_{\ge 0}$. Therefore, the case of $f(x) = \lfloor \frac{x}{2}\rfloor $ is omitted in this study.

\begin{defn}
We denote the pile of $m$ stones as $(m)$, which we call the position of the game.
\end{defn}

We define $\textit{move}(t)$ for the maximum Nim of Definition \ref{defofmaxnim} for the rule sequence $f(x)$.

\begin{defn}\label{moveofvpsnim}
$\textit{move}(t)$ is the set of all the positions that can be reached from position $(t)$.
 For any $t \in Z_{\ge 0}$, we have 
	\begin{flalign}
	 & \textit{move}(t)	= \{(t-v):v \leq \lceil \frac{t}{2}  \rceil \text{ and } v \in N \}. & \nonumber 
	\end{flalign}
\end{defn}

%下のlemmaの(i)より前の部分は追加。ただし、英文校正された別の場所の文章をペースト

\begin{lemma}\label{grundyhalfnim} 
Let $\mathcal{G}$ represent the Grundy number of the maximum Nim with the rule sequence $f(x) = \lceil \frac{x}{2}  \rceil$. Then, we have the following properties: \\
$(i)$	If $t$ is even and $t \geq 2$, $\mathcal{G}(t) = \mathcal{G}(\frac{t-2}{2})$. \\
$(ii)$ 	If $t$ is odd, $\mathcal{G}(t) =\frac{t+1}{2}$.
\end{lemma}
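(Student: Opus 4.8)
The plan is to reduce both claims to a direct application of Lemma \ref{lemmabylevinenim}, exploiting the fact that $f(x) = \lceil \frac{x}{2} \rceil$ behaves very simply under increments: as $x$ increases by one, $f$ either stays constant or grows by exactly one, and which of the two happens is governed solely by the parity of $x$. So the first step is to record, for each parity of $t$, the values of $f(t)$ and $f(t-1)$ and thereby decide which branch of Lemma \ref{lemmabylevinenim} is in force.

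First I would dispatch the odd case $(ii)$, since it is the one that is closed off immediately. Writing $t = 2k+1$, we have $f(t) = \lceil \frac{2k+1}{2} \rceil = k+1$ while $f(t-1) = \lceil \frac{2k}{2} \rceil = k$, so that $f(t) > f(t-1)$. Lemma \ref{lemmabylevinenim}$(ii)$ then gives $\mathcal{G}(t) = f(t) = k+1 = \frac{t+1}{2}$, which is exactly the asserted formula.

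Next I would treat the even case $(i)$, which instead lands in the recursive branch. Writing $t = 2k$ with $k \geq 1$, we have $f(t) = k$ and $f(t-1) = \lceil \frac{2k-1}{2} \rceil = k$, so $f(t) = f(t-1)$. Lemma \ref{lemmabylevinenim}$(i)$ then applies and yields $\mathcal{G}(t) = \mathcal{G}(t - f(t) - 1) = \mathcal{G}(2k - k - 1) = \mathcal{G}(k-1)$, and since $k - 1 = \frac{t-2}{2}$ this is precisely $\mathcal{G}(\frac{t-2}{2})$.

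I do not anticipate any substantial obstacle; the whole argument is bookkeeping on top of Lemma \ref{lemmabylevinenim}. The only points demanding care are the parity evaluation of the ceiling function (making sure the odd case strictly increases $f$ while the even case leaves it fixed) and the simplification of the shifted index $t - f(t) - 1$ in the even case, where one must check that $2k - k - 1$ reduces cleanly to $\frac{t-2}{2}$. Once these two elementary identities are confirmed, both statements of the lemma follow at once.
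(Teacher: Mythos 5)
Your proposal is correct and follows exactly the same route as the paper: both cases are settled by checking whether $f(t)=\lceil\frac{t}{2}\rceil$ equals or exceeds $f(t-1)$ and then invoking the corresponding branch of Lemma \ref{lemmabylevinenim}. Your explicit substitutions $t=2k+1$ and $t=2k$ merely spell out the parity bookkeeping that the paper leaves implicit.
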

\begin{proof}
\textbf{(i)}	If $t$ is even, $\lceil \frac{t}{2}  \rceil = \lceil \frac{t-1}{2} \rceil$. Therefore, according to $(i)$ in Lemma \ref{lemmabylevinenim}, $\mathcal{G}(t) = \mathcal{G}(t-\lceil \frac{t}{2}  \rceil -1)$ $=\mathcal{G}(\frac{t-2}{2})$. \\
\textbf{(ii)} 	If $t$ is odd, $\lceil \frac{t}{2}  \rceil > \lceil \frac{t-1}{2} \rceil$. Therefore, according to $(ii)$ of Lemma \ref{lemmabylevinenim}, we have $\mathcal{G}(t) = \lceil \frac{t}{2} \rceil$ $= \frac{t+1}{2}$. \\
\end{proof}

\subsection{Three-Pile Maximum Nim}\label{twopilepass}

\begin{defn}\label{defofmaxnim3piles}
Suppose that there are three piles of stones and two players take turns to remove stones from the piles.
In each turn, the player chooses a pile and removes at least one stone and at most $f(x) = \lceil \frac{x}{2}  \rceil$ stones, where $x$ represents the number of stones. The player who removes the last stone is the winner. 
 The position of the game is represented by three coordinates $\{s,t,u\}$, where $s$, $t$, and $u$ represent the numbers of stones in the first, second, and third piles, respectively.
\end{defn}

According to the results presented in Section \ref{sectionforceil} and Theorem \ref{thofsumofgame}, we can calculate the Grundy numbers of the game in Definition \ref{defofmaxnim3piles}.

\begin{theorem}\label{thmforsumgame}
Let $\mathcal{G}(t)$ be the Grundy number of the game in Subsection \ref{sectionforceil}.
Then, the Grundy number $\mathcal{G}(s,t,u)$ of the game of Definition \ref{defofmaxnim3piles} satisfies the following equation:
$\mathcal{G}(s,t,u) = \mathcal{G}(s) \oplus \mathcal{G}(t)\oplus \mathcal{G}(u)$.
\end{theorem}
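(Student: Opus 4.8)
The plan is to recognize the three-pile game of Definition \ref{defofmaxnim3piles} as nothing more than the disjunctive sum of three copies of the one-pile maximum Nim studied in Subsection \ref{sectionforceil}, and then to invoke the Sprague--Grundy machinery already recorded in Theorem \ref{thofsumofgame}. The crucial structural observation is that the constraint on a move depends only on the pile being played: when a player selects a pile with $x$ stones, the number of stones removable is at most $\lceil \frac{x}{2} \rceil$, a quantity determined solely by that pile and wholly independent of the contents of the other two piles. Hence a single turn in the three-pile game is exactly a single move in one of three otherwise-independent component games, with the remaining two piles left untouched. This is precisely the situation described by the disjunctive sum in Definition \ref{sumofgames}.

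First I would make this identification precise. Writing $\mathbf{G}$ for the one-pile ruleset of Subsection \ref{sectionforceil}, I would argue that the position $\{s,t,u\}$ of the three-pile game corresponds to the position $\{s,t,u\}$ of the supergame $\mathbf{G}+\mathbf{G}+\mathbf{G}$, and that the two games have identical move sets. Concretely, the set of positions reachable from $\{s,t,u\}$ in one move is
\begin{equation}
\{\{s',t,u\}: (s')\in \textit{move}(s)\} \cup \{\{s,t',u\}: (t')\in \textit{move}(t)\} \cup \{\{s,t,u'\}: (u')\in \textit{move}(u)\},
\end{equation}
where $\textit{move}$ is the one-pile move map of Definition \ref{moveofvpsnim}. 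Since altering one coordinate via its own one-pile rule while fixing the other two is exactly what a move in the disjunctive sum permits, the two rulesets coincide, and therefore so do their Grundy numbers.

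Next I would apply Theorem \ref{thofsumofgame}$(ii)$. That statement is phrased for a sum of two games, giving $G_{\mathbf{G}}(\mathbf{g}) \oplus G_{\mathbf{H}}(\mathbf{h})$, so to reach three piles I would extend it by associativity: grouping $\mathbf{G}+\mathbf{G}+\mathbf{G}$ as $(\mathbf{G}+\mathbf{G})+\mathbf{G}$ and applying the theorem once to the inner sum and once to the outer sum yields
\begin{equation}
\mathcal{G}(s,t,u) = \bigl(\mathcal{G}(s)\oplus\mathcal{G}(t)\bigr)\oplus\mathcal{G}(u) = \mathcal{G}(s)\oplus\mathcal{G}(t)\oplus\mathcal{G}(u),
\end{equation}
where the final equality uses the associativity of the nim-sum $\oplus$ from Definition \ref{definitionfonimsum11}. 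Both the disjunctive sum and the nim-sum are associative, so the grouping is immaterial and the formula is well defined.

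The genuinely substantive content of this theorem was already established upstream, so there is no hard analytic obstacle here; the only point requiring care is the passage from the two-summand form of Theorem \ref{thofsumofgame}$(ii)$ to three summands. I expect this to be the main (and only) step deserving explicit comment, and it is dispatched by the routine associativity argument above, which generalizes immediately to any finite number of piles by a trivial induction.
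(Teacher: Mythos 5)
Your proposal is correct and takes essentially the same route as the paper, which simply observes that the result follows directly from Theorem \ref{thofsumofgame}; you merely spell out the identification of the three-pile game with the disjunctive sum $\mathbf{G}+\mathbf{G}+\mathbf{G}$ and the associativity step that the paper leaves implicit. No further changes are needed.
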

\begin{proof}
This is directly from Theorem \ref{thofsumofgame}.
\end{proof}

\section{Maximum Nim with a Pass}\label{pass1}
In Subsections \ref{sectionforceilpass} and \ref{twowithpass}, we modify the standard rules of the games to allow for a one-time pass, that is, a pass move that may be used at most once in the game and not from a terminal position. Once a pass has been used by either player, it is no longer available.

\subsection{Maximum Nim with a Pass Whose Rule Sequence is  $f(x) = \lceil \frac{x}{2}  \rceil$ with a pass move}\label{sectionforceilpass}
The position of this game is represented by two coordinates $\{t, p\}$, where $t$ represents the number of stones in the pile. $p = 1$ if the pass is still available; otherwise, $p = 0$.

We define $\textit{move}$ in this game.
\begin{defn}\label{defofonenimepass}
 For any $t \in Z_{\ge 0}$, we have $(i)$ and $(ii)$. \\
 $(i)$ If $p=1$ and $t>0$,
\begin{equation}
 \textit{move}(t,p)= \{\{t-v,p\}:v \leq \lceil \frac{t}{2}  \rceil \text{ and } v \in N \} \cup \{\{t,0\}\}.\nonumber 
\end{equation}
 $(ii)$ If $p=0$ or $t=0$,
\begin{equation}
\textit{move}(t,p)= \{\{t-v,p\}:u \leq \lceil \frac{t}{2}  \rceil \text{ and } v \in N \}. \nonumber 
\end{equation}
\end{defn}
\begin{rem}
Note that a pass is not available from position $\{t,1\}$ with $t=0$, which is the terminal position.
It is clear that $\mathcal{G}(t,0)$ is identical to $\mathcal{G}(t)$ in Section \ref{sectionforceil}.
\end{rem}

According to Definitions \ref{defofmexgrundy} and \ref{defofonenimepass}, we define the Grundy number $\mathcal{G}(t,p)$ of the position $\{t,p\}$.

\includegraphics[height=1.5cm]{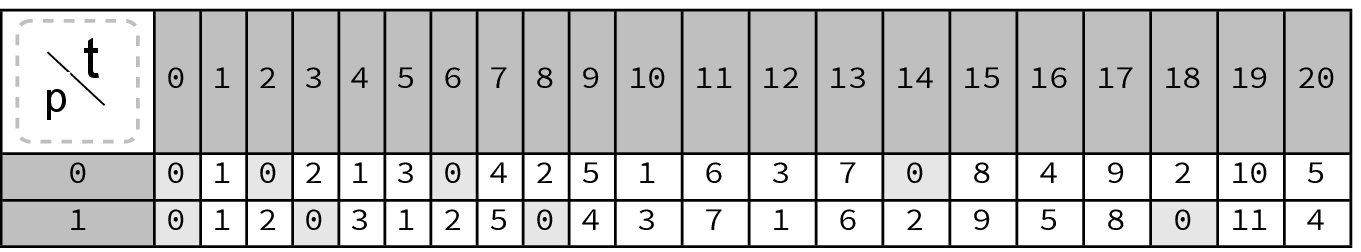}
\begin{pict}\label{Grundy number of half nim with a pass}
Table of Grundy numbers $\mathcal{G}(t,p)$
\end{pict}

%下のthoremの最初の文章で、(i)を含まない部分はtheorem 4の英文校正済みのものを使った。
	
\begin{theorem}\label{grundyhalfnimpass}
Let $\mathcal{G}(s,p)$ be the Grundy number of position $\{s,p\}$. Then, we obtain the following equations: \\
$(i)$  $\mathcal{G}(0,0) = 0$ and  $\mathcal{G}(0,1) = 0$. \\
$(ii)$   For $u \in N$,
if $\mathcal{G}(u,0)=0$, then        $ \mathcal{G}(u,1)=1.\nonumber$\\
$(iii)$	 For $u \in N$,
if $\mathcal{G}(u,0)=2$, then        $ \mathcal{G}(u,1)=0.\nonumber$\\
$(iv)$ 	For $u,m \in N$ such that $m>1$,
if $\mathcal{G}(u,0)=2m$, then        $ \mathcal{G}(u,1)=2m-1.\nonumber$\\
$(v)$	For $u,m \in N$, if $\mathcal{G}(u,0)=2m-1$, then        $ \mathcal{G}(u,1)=2m.$\nonumber
\end{theorem}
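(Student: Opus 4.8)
The plan is to prove, by strong induction on $t$, the single unified statement that $\mathcal{G}(t,1)=\phi(\mathcal{G}(t,0))$ for every $t\ge 1$, where $\phi$ is the permutation of $Z_{\ge 0}$ recorded by clauses $(ii)$–$(v)$: namely $\phi(0)=1$, $\phi(1)=2$, $\phi(2)=0$, and $\phi(2m-1)=2m$, $\phi(2m)=2m-1$ for $m\ge 2$. Clause $(i)$, $\mathcal{G}(0,1)=0$, is immediate from the terminal-position convention. Writing $c=\mathcal{G}(t,0)$ and using Definition \ref{defofonenimepass}, for $t\ge 2$ the pass option sends $\{t,1\}$ to $\{t,0\}$ (value $c$) while the ordinary moves reach $\{t-v,1\}$ for $1\le v\le\lceil \frac{t}{2}\rceil$; since every such $t-v$ is at least $\lfloor \frac{t}{2}\rfloor\ge 1$, the inductive hypothesis gives $\mathcal{G}(t-v,1)=\phi(\mathcal{G}(t-v,0))$. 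Hence
\[
\mathcal{G}(t,1)=\operatorname{mex}\!\bigl(\phi(M)\cup\{c\}\bigr),\qquad M=\{\mathcal{G}(t-v,0):1\le v\le\lceil\tfrac{t}{2}\rceil\}.
\]
The exceptional position $\{0,1\}$ (terminal, pass unavailable) can be reached by an ordinary move only from $t=1$, so I would take $t=0,1$ as base cases and run the induction for $t\ge 2$.

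The crucial input is an exact description of $M$, which I would isolate as a lemma on the pass-free game: for every $k\ge 0$ the map $j\mapsto\mathcal{G}(j,0)$ carries $\{k,k+1,\ldots,2k\}$ bijectively onto $\{0,1,\ldots,k\}$. This follows quickly from two observations. First, $\mathcal{G}(j,0)\le\lceil\frac{j}{2}\rceil$ because, by Definition \ref{moveofvpsnim}, $\textit{move}(j)$ has only $\lceil\frac{j}{2}\rceil$ elements, so every Grundy value on $[k,2k]$ lies in $\{0,\ldots,k\}$. Second, $\textit{move}(2k+1)$ is exactly $\{k,\ldots,2k\}$, so $\operatorname{mex}\{\mathcal{G}(j,0):k\le j\le 2k\}=\mathcal{G}(2k+1,0)=k+1$ by Lemma \ref{grundyhalfnim}. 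Having $k+1$ values lying in a set of size $k+1$ with mex $k+1$ forces them to be a bijection. Consequently, for odd $t=2k+1$ we get $M=\{0,\ldots,k\}$ with $c=k+1$, and for even $t=2k$ we get $M=\{0,\ldots,k\}\setminus\{c\}$ with $c=\mathcal{G}(2k,0)\in\{0,\ldots,k\}$.

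It then remains to evaluate the two mex expressions, for which I would record how $\phi$ acts on an initial segment: since $\phi$ is the product of the $3$-cycle $(0\,1\,2)$ with the transpositions $(3\,4),(5\,6),\ldots$, one has $\phi(\{0,\ldots,n\})=\{0,\ldots,n\}$ when $n$ is even and $n\ge 2$, and $\phi(\{0,\ldots,n\})=\{0,\ldots,n-1,n+1\}$ when $n$ is odd and $n\ge 3$. For odd $t$ this makes $\operatorname{mex}(\phi(\{0,\ldots,k\})\cup\{k+1\})$ collapse to $\phi(k+1)=\phi(c)$, split according to the parity of $k$. For even $t$, injectivity of $\phi$ gives $\phi(M)=\phi(\{0,\ldots,k\})\setminus\{\phi(c)\}$; since $\phi$ is fixed-point free, reinserting the untransformed pass value $c$ (which differs from $\phi(c)$) restores this image everywhere except at $\phi(c)$, so a short case check on $c$ (separating $c\in\{0,1,2\}$, where the $3$-cycle intervenes, from $c\ge 3$, where $\phi$ merely swaps neighbours) yields $\operatorname{mex}=\phi(c)$.

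The main obstacle is the even-$t$ computation: because the pass value $c$ re-enters untransformed while the ordinary moves contribute $\phi$ of their values, one must verify that deleting $\phi(c)$ from the image segment and reinserting $c$ leaves a set whose first gap is precisely $\phi(c)$, and the anomalous action of $\phi$ on $\{0,1,2\}$, together with the endpoint behaviour when $k$ is odd (where the image is $\{0,\ldots,k-1,k+1\}$ rather than a full segment), must be tracked carefully. I expect that bookkeeping, along with the separate base case $t=1$ forced by the terminal position $\{0,1\}$ and the small instances $k=1$ arising inside the inductive step, to be the only genuinely delicate part; everything else is the routine mex evaluation sketched above.
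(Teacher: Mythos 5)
Your argument is correct, and it reaches the result by a genuinely different route from the paper's. Both proofs induct on $t$ via the same permutation $\phi$ and the same decomposition of $\textit{move}(t,1)$ into the $\phi$-transformed ordinary options plus the untransformed pass value $c=\mathcal{G}(t,0)$; the difference is the input used about the pass-free game. The paper extracts only the soft consequences of $\textit{mex}$, namely that the option values of $\{t,0\}$ contain $\{0,\ldots,c-1\}$ and omit $c$, and then evaluates the mex of $\phi(\{0,\ldots,c-1\})\cup\{c\}$. Your bijection lemma (that $j\mapsto\mathcal{G}(j,0)$ carries $\{k,\ldots,2k\}$ onto $\{0,\ldots,k\}$) instead determines the option value set $M$ exactly, and the mex bookkeeping you outline does close in every case, including the anomalies on $\{0,1,2\}$. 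What the stronger lemma buys is precisely the one case where the soft argument is insufficient: when $c=1$ and $t\ge 2$ (clause $(v)$ with $m=1$; $t=4,10,22,\ldots$), knowing only that the options of $\{t,0\}$ realize $\{0\}$ and avoid $1$ yields $\phi(\{0\})\cup\{1\}=\{1\}$, which does not contain $0$ and so does not force $\textit{mex}=2$; one must additionally know that some option of $\{t,0\}$ has Grundy value $2$, equivalently that some option of $\{t,1\}$ has value $0$. That fact follows from your lemma (for even $t=2k$ one has $c<k$, so $2\in M=\{0,\ldots,k\}\setminus\{1\}$), but not from the mex characterization alone; the paper's displayed inclusion for clause $(v)$ degenerates to $\{1\}$ when $m=1$, so your route is not merely different but more watertight there. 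The price is that your lemma is specific to $f(x)=\lceil \frac{x}{2}\rceil$ (it uses $|\textit{move}(j)|=\lceil \frac{j}{2}\rceil$ and $\textit{move}(2k+1)=\{k,\ldots,2k\}$), whereas the paper's template would transfer to other regular rule sequences; within this paper that costs nothing.
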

\begin{proof}
$(i)$ $\mathcal{G}(0,0)= 0$ because $\{0,0\}$ is the terminal position.
We cannot move to any position or use a pass move from position $\{0,1\}$.
Hence, $\{0,1\}$ is also a terminal position. Therefore,
$\mathcal{G}(0,1) = 0$.

Next, we prove $(ii)$, $(iii)$, $(iv)$, and $(v)$ using mathematical induction.
From $(ii)$ of Lemma \ref{grundyhalfnim}, 
\begin{equation}
 \mathcal{G}(1,0)=\mathcal{G}(1) = \frac{1+1}{2}= 1. \label{g10eq1}
\end{equation}
As $\textit{move}(1,1) =\{\{0,1\},\{1,0\}\}$,
$\mathcal{G}(0,1) = 0$, and $\mathcal{G}(1,0) = 1$, we have
\begin{align}
\mathcal{G}(1,1) & = \textit{mex}(\{\mathcal{G}(k,h):\{k,h\} \in \textit{move}(1,1)\}) \nonumber \\
& =\textit{mex}(\{\mathcal{G}(0,1),\mathcal{G}(1,0)\}), \nonumber \\
& =\textit{mex}(\{0,1\})=2. \label{g11eq2}
\end{align}
Let $t \in N$. From equations (\ref{g10eq1}) and (\ref{g11eq2}), we have only to prove the case such that
\begin{equation}
t \geq 2. \label{casegreat2}
\end{equation}
We suppose that $(ii)$, $(iii)$, $(iv)$, and $(v)$ are valid for $k \in Z_{\ge 0}$ such that $k < t $. From the inequality in (\ref{casegreat2}), we have 
\begin{equation}
\{0,0\} \notin \textit{move}(t,0)=\{\{t-1,0\}, \cdots, \{t-\lceil \frac{t}{2}  \rceil,0\}\}; \nonumber
\end{equation}
hence, we have
\begin{equation}
k \in N \label{kinn}
\end{equation}
when $\{k,0\} \in \textit{move}(t,0)$. \\
$(ii)$ Suppose that 
\begin{equation}
\mathcal{G}(t,0) = 0.\label{gtoeq0}
\end{equation}
\begin{equation}
\mathcal{G}(t,0) = \textit{mex}(\{\mathcal{G}(k,0):\{k,0\} \in \textit{move}(t,0)\}); \nonumber  
\end{equation}
hence, according to the definition of the Grundy number in Definition \ref{defofmexgrundy},  
\begin{align}
 0 \notin & \{\mathcal{G}(k,0):\{k,0\} \in \textit{move}(t,0)\} \nonumber \\
= & \{ \mathcal{G}(t-1,0), \cdots, \mathcal{G}(t-\lceil \frac{t}{2}  \rceil,0)\}. \nonumber
\end{align}
From relation (\ref{kinn}) and the mathematical induction hypothesis for $(ii)$, $(iii)$, $(iv)$, and $(v)$, 
\begin{equation}
1 \notin \{\mathcal{G}(t-1,1), \cdots, \mathcal{G}(t-\lceil \frac{t}{2}  \rceil,1)\}.\label{no1in}
\end{equation}
\begin{align}
 \mathcal{G}(t,1)& = \textit{mex}(\{\mathcal{G}(k,m):\{k,m\} \in \textit{move}(t,1)\}) \nonumber \\
& = \textit{mex}(\{\mathcal{G}(k,1):\{k,1\} \in \textit{move}(t,1)\}\cup \{ \mathcal{G}(t,0)  \}) \nonumber \\
 & = \textit{mex}(\{\mathcal{G}(t-1,1), \cdots, \mathcal{G}(t-\lceil \frac{t}{2} \rceil,1)\} \cup \{ \mathcal{G}(t,0)  \}) \nonumber
\end{align}
Hence, from Equation (\ref{gtoeq0}), relation (\ref{no1in}), and the definition of the Grundy number in Definition \ref{defofmexgrundy}, we have $\mathcal{G}(t,1) = 1$. \\
$(iii)$
Suppose that 
\begin{equation}
\mathcal{G}(t,0) = 2.\label{grundye2}
\end{equation}
Then,
\begin{align}
& 2 \notin \{\mathcal{G}(k,0):\{k,0\} \in \textit{move}(t,0)\} \nonumber \\
& = \{\mathcal{G}(t-1,0), \cdots, \mathcal{G}(t- \lceil \frac{t}{2}  \rceil ,0)\}. \label{no2in}
\end{align}
Therefore, according to relation (\ref{kinn}) and the mathematical induction hypothesis for $(ii)$, $(iii)$, $(iv)$, and $(v)$, we have
\begin{equation}
0 \notin \{\mathcal{G}(t-1,1), \cdots, \mathcal{G}(t- \lceil \frac{t}{2}  \rceil,1)\}. \label{not0in2}
\end{equation}
\begin{align}
\mathcal{G}(t,1) & =\textit{mex}(\{\mathcal{G}(k,m):\{k,m\} \in \textit{move}(t,1)\}) \nonumber \\
& = \textit{mex}(\{ \mathcal{G}(t-1,1), \cdots, \mathcal{G}(t- \lceil \frac{t}{2}  \rceil,1)\}\cup \{ \mathcal{G}(t,0)  \}), \nonumber 
\end{align}
Hence, from Equation (\ref{grundye2}) and relation (\ref{not0in2}), we have $\mathcal{G}(t,1) = 0$. \\
$(iv)$ Suppose that 
\begin{equation}
\mathcal{G}(t,0)=2m \label{gt0eq2m}   
\end{equation}
 for a natural number $m$ such that $m > 1$. We proved that $\mathcal{G}(t,1) = 2m-1$. Because
\begin{equation}
2m = \mathcal{G}(t,0) =\textit{mex}(\{\mathcal{G}(k,0):\{k,0\} \in \textit{move}(t,0)\}), \nonumber
\end{equation}
\begin{equation}
\{\mathcal{G}(k,0):\{k,0\} \in \textit{move}(t,0)\} \supset \{2m-1, 2m-2,...,4,3,2,1,0\}\label{include2mminus1}
\end{equation}
 and
\begin{equation} 
2m \notin \{\mathcal{G}(k,0):\{k,0\} \in \textit{move}(t,0)\}.\label{not2min}
 \end{equation}
From the relations (\ref{kinn}), (\ref{include2mminus1}), and (\ref{not2min}) and the mathematical induction hypothesis for (ii),(iii), (iv), and (v), 
\begin{equation}
\{\mathcal{G}(k,1):\{k,1\} \in \textit{move}(t,1)\} \supset \{2m, 2m-3,2m-2,2m-5,2m-4,...,3,4,0,2,1\} \nonumber  
\end{equation}
and 
\begin{equation}
2m-1 \notin \{\mathcal{G}(k,m):\{k,1\} \in \textit{move}(t,1)\}. \nonumber  
\end{equation}
\begin{equation}
\mathcal{G}(t,1) =\textit{mex}(\{\mathcal{G}(k,1):\{k,1\} \in \textit{move}(t,1)\} \cup \{\mathcal{G}(t,0)\}); \nonumber  
\end{equation}
hence, according to (\ref{gt0eq2m}), $\mathcal{G}(t,1) = 2m-1 $. \\
$(v)$ Suppose that
\begin{equation}
\mathcal{G}(t,0)=2m-1 \label{gt02mminus1}
\end{equation}
 for natural number $m$.
Since 
\begin{equation}
2m-1 = \mathcal{G}(t,0) =\textit{mex}(\{\mathcal{G}(k,0):\{k,0\} \in \textit{move}(t,0)\}), \nonumber
\end{equation}
\begin{equation}
\{\mathcal{G}(k,0):\{k,0\} \in \textit{move}(t,0)\} \supset \{ 2m-2,2m-3...,2,1,0\} \label{inc2mminus2}
\end{equation}
and 
\begin{equation}
2m-1 \notin \{\mathcal{G}(k,0):\{k,0\} \in \textit{move}(t,0)\}. \label{not2mminus1}
\end{equation}
From relations (\ref{kinn}), (\ref{inc2mminus2}), and (\ref{not2mminus1}) and the mathematical induction hypothesis for $(ii)$, $(iii)$, $(iv)$, and $(v)$, 
\begin{equation}
\{\mathcal{G}(k,1):\{k,1\} \in \textit{move}(t,1)\}  \supset \{2m-3,2m-2,2m-5,2m-4,...0,2,1\}\label{inc2mminus3}
\end{equation}
and
\begin{equation}
2m \notin \{\mathcal{G}(k,1):\{k,1\} \in \textit{move}(t,1)\}.\label{notin2mm}
\end{equation}
As 
\begin{equation}
\mathcal{G}(t,1) =\textit{mex}(\{\mathcal{G}(k,1):\{k,1\} \in \textit{move}(t,1)\} \cup \{\mathcal{G}(t,0)\}),\nonumber
\end{equation}
according to Equation (\ref{gt02mminus1}), relations (\ref{inc2mminus3}) and (\ref{notin2mm}), we have 
$\mathcal{G}(t,1) = 2m $.
\end{proof}
	
\subsection{Three-Pile Maximum Nim with a Pass}\label{twowithpass}
Here, we study maximum Nim with three piles based on Definition \ref{defofmaxnim3piles} by modifying the standard rules of the games to allow a one-time pass. We consider only three-pile games, although generalization to the case of an arbitrary natural number of games is straightforward.

We denote the position of the game with three coordinates $\{s,t,u,p\}$, where $s$, $t$, and $u$ represent the numbers of stones in the first, second, and third piles, respectively. $p = 1$ if the pass is still available, and $p = 0$ otherwise.

We define a $\textit{move}$ in this game as follows.
\begin{defn}\label{defof3pilepass}
 For any $s,t,u \in Z_{\ge 0}$, we have $(i)$ and $(ii)$. \\
 $(i)$ If $p=1$ and $s+t+u>0$,
\begin{align}
& \textit{move}(s,t,u,p)= \{\{s-v,t,u,p\}: v \leq \lceil \frac{s}{2}  \rceil \text{ and } v \in N \}\nonumber \\
& \cup \{\{s,t-v,u,p\}:v \leq \lceil \frac{t}{2}  \rceil \text{ and } v \in N \}  \nonumber \\
& \cup \{\{s,t,u-v,p\}:v \leq \lceil \frac{u}{2}  \rceil \text{ and } v \in N \} \cup \{\{s,t,u,0\}\}. \nonumber
\end{align}
$(ii)$ If $p=0$ or $s+t+u=0$,
 \begin{align}
& \textit{move}(s,t,u,p)= \{\{s-v,t,u,p\}:v \leq \lceil \frac{s}{2}  \rceil \text{ and } v \in N \}\nonumber \\
& \cup \{\{s,t-v,u,p\}:v \leq \lceil \frac{t}{2}  \rceil \text{ and } v \in N \}  \nonumber \\
& \cup \{\{s,t,u-v,p\}:v \leq \lceil \frac{u}{2}  \rceil \text{ and } v \in N \} . \nonumber
\end{align}
\end{defn}

According to Definitions \ref{defofmexgrundy} and \ref{defof3pilepass}, we define the Grundy number $\mathcal{G}(s,t,u,p)$ of the position $\{s,t,u,p\}$.

\begin{rem}
Note that a pass is not available from the position $\{s,t,u,1\}$ with $s+t+u=0$ which is the terminal position. 
It is clear that $\mathcal{G}(s,0,0,p)$, $\mathcal{G}(0,s,0,p)$, and $\mathcal{G}(0,0,s,p)$ are identical to $\mathcal{G}(s,p)$ in Section \ref{sectionforceilpass}.
\end{rem}

\begin{lemma}\label{caseof1110}
Let $\mathcal{G}(s,t,u,p)$ be the Grundy number of position $\{s,t,u,p\}$. Then, we obtain the following equations: \\
$(i)$   $\mathcal{G}(1,0,0,0) = \mathcal{G}(0,1,0,0)=\mathcal{G}(0,0,1,0)=1.$ \\
$(ii)$  $\mathcal{G}(1,1,0,0) = \mathcal{G}(0,1,1,0)=\mathcal{G}(1,0,1,0)=0.$ \\
$(iii)$  $\mathcal{G}(1,1,1,0) =1.$  \\
$(iv)$   $\mathcal{G}(1,0,0,1) = \mathcal{G}(0,1,0,1)=\mathcal{G}(0,0,1,1)=2.$ \\
$(v)$   $\mathcal{G}(1,1,0,1) = \mathcal{G}(0,1,1,1)=\mathcal{G}(1,0,1,1)=1.$  \\
$(vi)$   $\mathcal{G}(1,1,1,1) =0. $
\end{lemma}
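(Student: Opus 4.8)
The plan is to split the six claims into two groups according to whether the pass is still available, and to treat the $p=0$ cases by the disjunctive-sum machinery and the $p=1$ cases by direct computation. Parts $(i)$–$(iii)$ concern positions with $p=0$, where by Definition \ref{defof3pilepass}$(ii)$ the pass move is absent and the game coincides with the ordinary three-pile maximum Nim of Definition \ref{defofmaxnim3piles}. Hence I would invoke Theorem \ref{thmforsumgame} directly, so that $\mathcal{G}(s,t,u,0)=\mathcal{G}(s)\oplus\mathcal{G}(t)\oplus\mathcal{G}(u)$, with the single-pile values supplied by Lemma \ref{grundyhalfnim}. Since $\mathcal{G}(0)=0$ and $\mathcal{G}(1)=1$, this yields $\mathcal{G}(1,0,0,0)=1\oplus 0\oplus 0=1$ (and its permutations), $\mathcal{G}(1,1,0,0)=1\oplus 1\oplus 0=0$ (and its permutations), and $\mathcal{G}(1,1,1,0)=1\oplus 1\oplus 1=1$, establishing $(i)$, $(ii)$, and $(iii)$.

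For the $p=1$ parts $(iv)$–$(vi)$ I would compute the Grundy numbers straight from the $\textit{mex}$ definition, working upward through positions ordered by total number of stones so that every option has an already-known value. The essential preliminary is that $\{0,0,0,1\}$ is terminal: by the Remark following Definition \ref{defof3pilepass} the pass is unavailable when $s+t+u=0$, so $\mathcal{G}(0,0,0,1)=0$. With this base case fixed, each subsequent value depends only on positions lower in the ordering or already computed in $(i)$–$(iii)$.

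Then for $(iv)$, by Definition \ref{defof3pilepass}$(i)$ we have $\textit{move}(1,0,0,1)=\{\{0,0,0,1\},\{1,0,0,0\}\}$, whose options carry values $0$ and $1$ (the latter from $(i)$), so $\mathcal{G}(1,0,0,1)=\textit{mex}(\{0,1\})=2$; the two permutations agree by the symmetry of the ruleset in the three piles. For $(v)$, $\textit{move}(1,1,0,1)=\{\{0,1,0,1\},\{1,0,0,1\},\{1,1,0,0\}\}$ has option values $2,2,0$ (using $(iv)$ and $(ii)$), whence $\mathcal{G}(1,1,0,1)=\textit{mex}(\{0,2\})=1$. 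Finally for $(vi)$, $\textit{move}(1,1,1,1)=\{\{0,1,1,1\},\{1,0,1,1\},\{1,1,0,1\},\{1,1,1,0\}\}$ has all four option values equal to $1$ (from $(v)$ and $(iii)$), so $\mathcal{G}(1,1,1,1)=\textit{mex}(\{1\})=0$.

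This computation is entirely routine; the only point requiring care—and the closest thing to an obstacle—is the bookkeeping of the recursion order together with the correct handling of the boundary position $\{0,0,0,1\}$, where the pass option has effectively been removed by the rule forbidding a pass from a terminal position. Once that base case is pinned down, every other value is forced by options that have already been evaluated, so no genuine difficulty arises. I would present the three $p=1$ computations in the order $(iv)$, $(v)$, $(vi)$ precisely because each feeds the next.
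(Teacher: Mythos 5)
Your proposal is correct and follows essentially the same route as the paper: parts $(i)$--$(iii)$ via the disjunctive-sum theorem with the single-pile values from Lemma \ref{grundyhalfnim}, and parts $(v)$--$(vi)$ by direct \textit{mex} computations in the order you describe. The only cosmetic difference is in $(iv)$, where the paper identifies $\{1,0,0,1\}$ with the one-pile position $\{1,1\}$ and cites Theorem \ref{grundyhalfnimpass}, whereas you compute $\textit{mex}(\{0,1\})=2$ directly; both are valid and amount to the same calculation.
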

\begin{proof}
$(i)$  From Lemma \ref{grundyhalfnim}, $ \mathcal{G}(1,0,0,0) = \mathcal{G}(0,1,0,0)=\mathcal{G}(0,0,1,0)$  $=\mathcal{G}(1,0)$  \\ $=\mathcal{G}(1)=1.$\\
$(ii)$ From (i) and Theorem \ref{thofsumofgame}, we have
$\mathcal{G}(1,1,0,0)=\mathcal{G}(0,1,1,0)=\mathcal{G}(1,0,1,0)$
$=\mathcal{G}(1,0,0,0) \oplus \mathcal{G}(0,0,1,0)=1 \oplus 1 =0.$\\
$(iii)$ From (i) and Theorem \ref{thofsumofgame}, $\mathcal{G}(1,1,1,0) =\mathcal{G}(1,0,0,0) \oplus \mathcal{G}(0,1,0,0) \oplus \mathcal{G}(0,0,1,0)$ $= 1.$\\
$(iv)$ From (i) of Lemma \ref{grundyhalfnim}  and (v) of Theorem \ref{grundyhalfnimpass}, 
$\mathcal{G}(1,0,0,1) = \mathcal{G}(0,1,0,1)$ \\
$=\mathcal{G}(0,0,1,1)$ $=\mathcal{G}(1,1)=\mathcal{G}(1,0)+1=2.$ \\
$(v)$ From (ii) and (iv),
 \begin{align}
& \mathcal{G}(1,1,0,1) =\textit{mex}(\{\mathcal{G}(h,k,0,1):\{h,k,0,1\} \in \textit{move}(1,1,0,1)\} \cup \{\mathcal{G}(1,1,0,0)\})  \nonumber \\
& = \textit{mex}(\{\mathcal{G}(1,0,0,1), \mathcal{G}(0,1,0,1), \mathcal{G}(1,1,0,0)\})= \textit{mex}(\{2,2,0 \}) = 1. \nonumber
\end{align}
$(vi)$ From (iii) and (v),
 \begin{align}
& \mathcal{G}(1,1,1,1) =\textit{mex}(\{\mathcal{G}(h,k,j,1):\{h,k,j,1\} \in \textit{move}(1,1,1,1)\} \cup \{\mathcal{G}(1,1,1,0)\}) \nonumber \\
& = \textit{mex}(\{\mathcal{G}(1,1,0,1), \mathcal{G}(1,0,1,1), \mathcal{G}(0,1,1,1), \mathcal{G}(1,1,1,0)\})= \textit{mex}(\{1,1,1,1 \}) = 0. \nonumber
\end{align}
\end{proof}

\begin{theorem}\label{grundytwopilepass}
We have the following formulae for the Grundy numbers: \\
$(i)$  $\mathcal{G}(s,0,0,1) =\mathcal{G}(0,s,0,1) =\mathcal{G}(0,0,s,1) = \mathcal{G}(s,1)$ for $s \in Z_{\ge 0}$. \\
$(ii)$ We suppose that $s,t>0$, $t,u>0$, or $u,s>0$. Thus, we have the following: \ \
$(ii.1)$ For any $m \in Z_{\ge 0}$, if $ \mathcal{G}(s,t,u,0) = 2m$, then $ \mathcal{G}(s,t,u,1) = 2m+1.$\\
$(ii.2)$ For any $m \in Z_{\ge 0}$, if $ \mathcal{G}(s,t,u,0) = 2m+1$, then $ \mathcal{G}(s,t,u,1) = 2m.$
\end{theorem}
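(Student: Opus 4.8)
The plan is to derive part (i) immediately from the Remark that identifies the one-pile-with-pass game inside the three-pile game (so $\mathcal{G}(s,0,0,1)=\mathcal{G}(s,1)$ and its coordinate permutations), and to prove part (ii) by induction on the total number of stones $n=s+t+u$, invoking the sum-of-games formula $\mathcal{G}(s,t,u,0)=\mathcal{G}(s)\oplus\mathcal{G}(t)\oplus\mathcal{G}(u)$ of Theorem \ref{thmforsumgame} together with the one-pile-with-pass formulas of Theorem \ref{grundyhalfnimpass}. Writing $N=\mathcal{G}(s,t,u,0)$, the two clauses of part (ii) assert exactly $\mathcal{G}(s,t,u,1)=N\oplus1$, so for each position I would compute the $\textit{mex}$ of its option set and show it equals $N\oplus1$. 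The base cases (total $2$) are the positions $\{1,1,0\}$ and their permutations, supplied with a pass by Lemma \ref{caseof1110}(v); by the evident symmetry of the three piles I may always reorder the coordinates.

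The key structural observation is that a non-pass move can empty a pile only when that pile holds a single stone, so from a position with three nonempty piles, or with two nonempty piles both of size at least $2$, every non-pass move again leaves at least two nonempty piles. In this generic situation the induction hypothesis applies to every non-pass option, so the set of $p=1$ option values is exactly $T\oplus1:=\{v\oplus1:v\in T\}$, where $T$ is the value set of the non-pass moves of $\{s,t,u,0\}$, satisfying $\textit{mex}(T)=N$, that is $\{0,\dots,N-1\}\subseteq T$ and $N\notin T$. Adjoining the pass option, whose value is $N$, I would then treat the two parities separately. For $N$ even, XORing by $1$ fixes $\{0,\dots,N-1\}$ setwise, so the options cover $\{0,\dots,N\}$ while $N\oplus1=N+1$ is excluded (since $(N\oplus1)\oplus1=N\notin T$ and $N+1\ne N$), giving $\textit{mex}=N\oplus1$. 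For $N$ odd, $\{0,\dots,N-1\}\oplus1=\{0,\dots,N-2\}\cup\{N\}$, so the options cover $\{0,\dots,N-2\}\cup\{N\}$ while $N\oplus1=N-1$ is excluded, again giving $\textit{mex}=N\oplus1$.

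The step I expect to be the main obstacle is the boundary case: a position with exactly two nonempty piles, one of which is a single stone. Here one non-pass move empties that pile and lands in a one-pile position, where the clean relation $\mathcal{G}(\cdot,1)=\mathcal{G}(\cdot,0)\oplus1$ can fail, since Theorem \ref{grundyhalfnimpass} sends $2\mapsto0$ and $2m\mapsto2m-1$ rather than $2m\mapsto2m+1$. To handle it I would, by symmetry, take $\{1,t,0,1\}$ with $t\ge2$ and list the options directly: the unique pile-$1$ move gives $\{0,t,0,1\}$ of value $\mathcal{G}(t,1)$ by part (i); each pile-$2$ move gives $\{1,t',0,1\}$ with $t'\in[\lfloor t/2\rfloor,t-1]$, which still has two nonempty piles and so by induction has value $\mathcal{G}(1,t',0,0)\oplus1=\mathcal{G}(t')$; and the pass gives $N=1\oplus\mathcal{G}(t)$. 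Setting $g=\mathcal{G}(t)$, the pile-$2$ values form the set $M=\{\mathcal{G}(t'):t'\in[\lfloor t/2\rfloor,t-1]\}$ of one-pile options of $t$, for which $\textit{mex}(M)=g$, so $\{0,\dots,g-1\}\subseteq M$ and $g\notin M$. I would finish by showing $g$ is absent from the entire option set: it is absent from $M$; it differs from the pass value $1\oplus g$; and it differs from $\mathcal{G}(t,1)$ because the one-pile pass map of Theorem \ref{grundyhalfnimpass} has no fixed point. Hence $\textit{mex}=g=(1\oplus g)\oplus1=N\oplus1$, as required. This is precisely the point where the delicate fixed-point-free feature of the one-pile formula is what makes the pass harmless in the multi-pile game.
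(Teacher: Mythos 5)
Your proposal is correct and follows essentially the same route as the paper: part (i) by identification with the one-pile game, and part (ii) by induction with the same two-case split (every non-pass option retains two nonempty piles, versus the exceptional positions $\{s,1,0\}$ up to permutation, where one option collapses to a single pile and Theorem \ref{grundyhalfnimpass} must be invoked). Your write-up is a cleaner repackaging of the paper's argument --- stating both clauses uniformly as $\mathcal{G}(\cdot,1)=\mathcal{G}(\cdot,0)\oplus 1$ and reducing the boundary case to the observation that the map of Theorem \ref{grundyhalfnimpass} has no fixed point, where the paper instead runs separate even/odd computations and an explicit three-way evaluation of $\mathcal{G}(s,1)$ --- but the underlying decomposition and key lemmas are identical.
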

\begin{proof}
(I) Let $\{s,t,u,1\}$ be the position of the game.
The position $\{s,0,0,1\}$ is identical to the position $\{s,1\}$ in the game of Subsection \ref{sectionforceil}; hence, $\mathcal{G}(s,0,0,1) = \mathcal{G}(s,1)$.
Similarly, $\mathcal{G}(0,s,0,1) =\mathcal{G}(0,0,s,1) = \mathcal{G}(s,1)$, and we have $(i)$. \\
(II) We prove this using mathematical induction. From Lemma \ref{caseof1110}, $(ii.1)$ and $(ii.2)$ are valid for $s,t,u \in Z_{\ge 0}$ such that $s,t,u \leq 1$.  

Suppose that $(ii.1)$ and $(ii.2)$ for 
$\{h,k,j,p\}$ when $h \leq s, k \leq t$, $j \leq u$, $h+k+j<s+t+u$, and $p=0,1$.\\
(II.1)	Suppose that
\begin{equation}
 \mathcal{G}(s,t,u,0) =2m \label{geq2m}
\end{equation}
for $m \in Z_{\ge 0}$. Then, according to the definition of the Grundy number in Definition \ref{defofmexgrundy},
\begin{align}
& \{\mathcal{G}(h,k,j,0):\{h,k,j,0\} \in \textit{move}(s,t,u,0)\}\nonumber \\
\supset & \{2m-1, 2m-2,2m-3,2m-4, \cdots, 5,4,3,2,1,0\}\label{movefor0}
\end{align}
and
\begin{equation}
2m \notin \{\mathcal{G}(h,k,j,0):\{h,k,j,0\} \in \textit{move}(s,t,u,0)\}.\label{no2m}
\end{equation}
\underline{Case $(a)$} 
Suppose that
\begin{equation}
 \textit{move}(s,t,u,0) \cap \{\{s,0,0,0\},\{0,t,0,0\},\{0,0,u,0\}\} = \emptyset.\nonumber 
\end{equation}
Then, we have 
\begin{equation}
h,k>0 \text{ or } k,j>0 \text{ or } j,h>0 \label{condforhkj}
\end{equation}
when 
\begin{equation}
\{h,k,j,0\} \in  \textit{move}(s,t,u,0). \nonumber 
\end{equation}
By applying the mathematical induction hypothesis to $(ii.1)$ and $(ii.2)$, along with relations (\ref{movefor0}), (\ref{no2m}), and (\ref{condforhkj}), we obtain
\begin{align}
& \{\mathcal{G}(h,k,j,1):\{h,k,j,1\} \in \textit{move}(s,t,u,1)\} \nonumber \\
& \supset \{2m-2, 2m-1,2m-4,2m-3, \cdots, 4,5,2,3,0,1\} \label{incl2mninus2b}
\end{align}
and
\begin{equation}
 2m+1 \notin \{\mathcal{G}(h,k,j,1):\{h,k,j,1\} \in \textit{move}(s,t,u,1)\}. \label{no2mplus1b}
\end{equation}
As
\begin{equation}
\mathcal{G}(s,t,u,1) =\textit{mex}(\{\mathcal{G}(h,k,j,1):\{h,k,j,1\} \in \textit{move}(s,t,u,1)\} \cup \{\mathcal{G}(s,t,u,0)\}), \nonumber
\end{equation}
according to Equations (\ref{geq2m}), (\ref{incl2mninus2b}), and (\ref{no2mplus1b}), we have 
$\mathcal{G}(s,t,u,1)=2m+1$.\\
\underline{Case $(b)$} 
Suppose that 
\begin{align}
&  \textit{move}(s,t,u,0) \cap \{\{s,0,0,0\},\{0,t,0,0\},\{0,0,u,0\}\} \ne \emptyset. \nonumber 
\end{align}
Then, we have 
$\{t,u\}=\{1,0\}$ or $\{t,u\}=\{0,1\}$ or $\{s,u\}=\{1,0\}$ or $\{s,u\}=\{0,1\}$ or $\{s,t\}=\{1,0\}$ or $\{s,t\}=\{0,1\}$.

Here, we prove only the case where $\{t,u\}=\{1,0\}$. If $s = 1$, according to Lemma \ref{caseof1110}, we have 
$\mathcal{G}(1,1,0,0) = 0$ and $\mathcal{G}(1,1,0,1) = 1=\mathcal{G}(1,1,0,0)+1$. This result satisfies $(ii.1)$.

Therefore, we assume the following:
\begin{equation}\label{gs002m}
\mathcal{G}(s,1,0,0) = 2m,
\end{equation}
where $s>1$, and we prove that $\mathcal{G}(s,1,0,1) = 2m+1$.
Because $s>1$, we have
\begin{equation}
  \textit{move}(s,1,0,0) = \{\{s-v,1,0,0\}: v \in N \text{ and } v \leq \lceil \frac{s}{2} \rceil\}\cup \{\{s,0,0,0\}\}, \label{nowhere0100}
 \end{equation} 
where 
 \begin{equation} 
s-v > s - \lceil \frac{s}{2}  \rceil >0 \label{nowhere0100b}
\end{equation}
 for $v$ such that $v \leq \lceil \frac{s}{2}  \rceil$.

\begin{equation}\label{s002m}
\mathcal{G}(s,1,0,0)=\mathcal{G}(s,0,0,0)  \oplus \mathcal{G}(0,1,0,0) = \mathcal{G}(s,0,0,0)  \oplus \mathcal{G}(1,0)= \mathcal{G}(s,0,0,0)  \oplus 1\nonumber
\end{equation}
Hence, according to Equation (\ref{gs002m}), we have
\begin{equation}\label{s0002m1}
\mathcal{G}(s,0,0,0)=2m+1.
\end{equation}
We use relation (\ref{movefor0}) for $\{s,t,u,0\} = \{s,1,0,0\}$; then, we have
\begin{align}
& \{\mathcal{G}(h,k,0,0):\{h,k,0,0\} \in \textit{move}(s,1,0,0)\} \nonumber \\
& = \{\mathcal{G}(h,1,0,0):\{h,1,0,0\} \in \textit{move}(s,1,0,0)\} \cup \{\mathcal{G}(s,0,0,0)\} \nonumber \\ 
& \supset  \{2m-1,2m-2,2m-3,2m-4, \cdots,5,4,3,2,1,0\};\nonumber
\end{align}
hence, from Equation (\ref{s0002m1}), 
we have 
\begin{align}
& \{\mathcal{G}(h,1,0,0):\{h,1,0,0\} \in \textit{move}(s,1,0,0)\}\nonumber \\
& = \{\{s-v,1,0,0\}: v \in N \text{ and } v \leq \lceil \frac{s}{2} \rceil\} \nonumber \\
& \supset  \{2m-1,2m-2,2m-3,2m-4, \cdots,5,4,3,2,1,0\}.\label{movefor02}
\end{align}
We use relation (\ref{no2m}) for $\{s,t,u,0\} = \{s,1,0,0\}$; then, we have 
\begin{equation}
2m \notin \{\mathcal{G}(h,1,0,0):\{h,1,0,0\} \in \textit{move}(s,1,0,0)\}\cup \{\{s,0,0,0\}\}.\label{no2m2}
\end{equation}
From equations (\ref{nowhere0100}) and (\ref{movefor02}), the inequality in (\ref{nowhere0100b}), and the mathematical induction hypothesis for $(ii.1)$ and $(ii.2)$, we have
\begin{align}
& \{\mathcal{G}(h,1,0,1):\{h,1,0,1\} \in \textit{move}(s,1,0,1)\} \nonumber \\
& \supset \{2m-2, 2m-1,2m-4,2m-3, \cdots, 4,5,2,3,0,1\}. \label{ind2mminus2c}
\end{align}
From equations (\ref{nowhere0100}) and (\ref{no2m2}), the inequality in (\ref{nowhere0100b}), and the mathematical induction hypothesis for $(ii.1)$ and $(ii.2)$, we have 
\begin{equation}
 2m+1 \notin \{\mathcal{G}(h,1,0,1):\{h,1,0,1\} \in \textit{move}(s,1,0,1)\}. \label{notincluding2m1}
\end{equation}
From Equations $(\ref{s0002m1})$ and $(iv)$ of Theorem \ref{grundyhalfnimpass}, we have
\begin{equation}
\mathcal{G}(s,0,0,1) =\mathcal{G}(s,1)=\mathcal{G}(s,0)+1= \mathcal{G}(s,0,0,0)+1 = 2m+2.\label{s0002m2}
\end{equation}
Since 
\begin{align}
& \mathcal{G}(s,1,0,1) =\textit{mex}(\{\mathcal{G}(h,k,0,1):\{h,k,0,1\} \in \textit{move}(s,1,0,1)\} \cup \{\mathcal{G}(s,1,0,0)\})  \nonumber \\
& = \textit{mex}(\{\mathcal{G}(h,1,0,1):\{h,1,0,1\} \in \textit{move}(s,1,0,1)\}  \nonumber \\
& \cup \{\mathcal{G}(s,0,0,1)\} \cup \{\mathcal{G}(s,1,0,0)\}), \nonumber 
\end{align}  
by the relations (\ref{ind2mminus2c}), (\ref{notincluding2m1}), and equations (\ref{gs002m}) and (\ref{s0002m2}), we have 
$\mathcal{G}(s,1,0,1)$ \\
$=2m+1$.\\
(II.2) We assume that: 
\begin{equation}
 \mathcal{G}(s,t,u,0) =2m+1 \label{geq2mb}
\end{equation}
for $m \in Z_{\ge 0}$. Then, according to the definition of the Grundy number in Definition \ref{defofmexgrundy}
\begin{align}
& \{\mathcal{G}(h,k,j,0):\{h,k,j,0\} \in \textit{move}(s,t,u,0)\}\nonumber \\
\supset & \{2m,2m-1, 2m-2,2m-3, \cdots, 5,4,3,2,1,0\}\label{movefor0b}
\end{align}
and
\begin{equation}
2m+1 \notin \{\mathcal{G}(h,k,j,0):\{h,k,j,0\} \in \textit{move}(s,t,u,0)\}.\label{no2mb}
\end{equation}
\underline{Case $(a)$} 
Suppose that
\begin{align}
&  \textit{move}(s,t,u,0) \cap \{\{s,0,0,0\},\{0,t,0,0\},\{0,0,u,0\}\} = \emptyset. \nonumber 
\end{align}
Then, we have 
\begin{equation}
h,k>0 \text{ or } k,j>0 \text{ or } j,h>0 \label{condforhkj2}
\end{equation}
when 
\begin{equation}
\{h,k,j,0\} \in  \textit{move}(s,t,u,0). \nonumber 
\end{equation}
According to the inequality in (\ref{condforhkj2}), relations (\ref{movefor0b}) and (\ref{no2mb}), and the mathematical induction hypothesis for $(ii.1)$ and $(ii.2)$, 
we have 
\begin{align}
& \{\mathcal{G}(h,k,j,1):\{h,k,j,1\} \in \textit{move}(s,t,u,1)\} \nonumber \\
& \supset \{2m+1,2m-2, 2m-1,2m-4,2m-3, \cdots, 4,5,2,3,0,1\} \label{incc2mplus1}
\end{align}
and 
\begin{equation}
 2m \notin \{\mathcal{G}(h,k,j,1):\{h,k,j,1\} \in \textit{move}(s,t,u,1)\}. \label{no2mint}
\end{equation}
As 
\begin{equation}
\mathcal{G}(s,t,u,1) =\textit{mex}(\{\mathcal{G}(h,k,j,1):\{h,k,j,1\} \in \textit{move}(s,t,u,1)\} \cup \{\mathcal{G}(s,t,u,0)\}), \nonumber
\end{equation}
 according to Equations (\ref{geq2mb}), (\ref{incc2mplus1}), and (\ref{no2mint}), we have 
$\mathcal{G}(s,t,u,1)=2m$. \\
\underline{Case $(b)$} 
We suppose that 
\begin{align}
&  \textit{move}(s,t,u,0) \cap \{\{s,0,0,0\},\{0,t,0,0\},\{0,0,u,0\}\} \ne \emptyset. \nonumber 
\end{align}
Then, we have 
$\{t,u\}=\{1,0\}$ or $\{t,u\}=\{0,1\}$ or $\{s,u\}=\{1,0\}$ or $\{s,u\}=\{0,1\}$ or $\{s,t\}=\{1,0\}$ or $\{s,t\}=\{0,1\}$.

Here, we prove only the case where $\{t,u\}=\{1,0\}$. If $s = 1$, according to Lemma \ref{caseof1110}, we have 
$\mathcal{G}(1,1,0,0) = 0$. This contradicts Equation  (\ref{geq2mb}). 

We suppose that 
\begin{equation}\label{s10002mbb}
\mathcal{G}(s,1,0,0) = 2m+1
\end{equation}
and $s>1$. 

Because $s>1$, we have
\begin{equation}
  \textit{move}(s,1,0,0) = \{\{s-v,1,0,0\}: v \in N \text{ and } v \leq \lceil \frac{s}{2} \rceil\}\cup \{\{s,0,0,0\}\}, \label{nowhere0100bb}
 \end{equation} 
where 
 \begin{equation} 
s-v > s - \lceil \frac{s}{2}  \rceil >0 \label{nowhere0100bbb}
\end{equation}
 for $v$ such that $v \leq \lceil \frac{s}{2}  \rceil$.
\begin{equation}\label{s002mb}
\mathcal{G}(s,1,0,0)=\mathcal{G}(s,0,0,0)  \oplus \mathcal{G}(0,1,0,0) = \mathcal{G}(s,0,0,0)  \oplus \mathcal{G}(1,0)= \mathcal{G}(s,0,0,0)  \oplus 1 \nonumber
\end{equation}
Hence, according to Equation (\ref{s10002mbb}), we have
\begin{equation}\label{s0002m1b}
\mathcal{G}(s,0,0,0)=2m.
\end{equation}
From relation (\ref{movefor0b}), 
\begin{align}
& \{\mathcal{G}(h,1,0,0):\{h,k,0,0\} \in \textit{move}(s,1,0,0)\}\nonumber \\
& =\{\mathcal{G}(h,1,0,0):\{h,1,0,0\} \in \textit{move}(s,1,0,0)\}\cup \{\mathcal{G}(s,0,0,0) \}\nonumber \\
\supset & \{2m,2m-1, 2m-2,2m-3, \cdots, 5,4,3,2,1,0\};\label{movefor0b2a}
\end{align}
hence, according to Equation (\ref{s0002m1b}),
we have 
\begin{align}
& \{\mathcal{G}(h,1,0,0):\{h,1,0,0\} \in \textit{move}(s,1,0,0)\}\nonumber \\
\supset & \{2m-1, 2m-2,2m-3, \cdots, 5,4,3,2,1,0\}.\label{movefor0b2}
\end{align}
From relations (\ref{nowhere0100bb}) and (\ref{movefor0b2}), the inequality in (\ref{nowhere0100bbb}), and the mathematical induction hypothesis for $(ii.1)$ and $(ii.2)$, 
we have
\begin{align}
& \{\mathcal{G}(h,1,0,1):\{h,1,0,1\} \in \textit{move}(s,1,0,1)\} \nonumber \\
& \supset \{2m-2, 2m-1,2m-4,2m-3, \cdots, 4,5,2,3,0,1\}. \label{include2mm201}
\end{align}
From relation (\ref{no2mb}), we have 
\begin{align}
& 2m+1 \notin \{\mathcal{G}(h,k,0,0):\{h,k,0,0\} \in \textit{move}(s,1,0,0)\}. \nonumber \\
& = \{\mathcal{G}(h,1,0,0):\{h,1,0,0\} \in \textit{move}(s,1,0,0)\}\cup \{\mathcal{G}(s,0,0,0) \}.\label{no2mb2}
\end{align}
From relations (\ref{nowhere0100bb}), (\ref{no2mb2}), inequality in (\ref{nowhere0100bbb}), and the mathematical induction hypothesis on $(ii.1)$ and $(ii.2)$, we have 
\begin{equation}
 2m \notin \{\mathcal{G}(h,1,0,1):\{h,1,0,1\} \in \textit{move}(s,1,0,1)\}. \label{no2mintb}
\end{equation}
If $m=0$, according to Equations (\ref{s0002m1b}) and (ii) of Theorem \ref{grundyhalfnimpass}, we have
\begin{equation}\label{caseof1}
\mathcal{G}(s,0,0,1) = \mathcal{G}(s,1) =1.  
\end{equation}
If $m=1$, according to Equations (\ref{s0002m1b}) and (iii) of Theorem \ref{grundyhalfnimpass}, we have
\begin{equation}\label{caseof2}
\mathcal{G}(s,0,0,1) = \mathcal{G}(s,1) =0.  
\end{equation}
If $m \ne 0,1$, according to Equations (\ref{s0002m1b}) and (iv) of Theorem \ref{grundyhalfnimpass}, we have
\begin{equation}\label{caseof3}
\mathcal{G}(s,0,0,1) = \mathcal{G}(s,1)=2m-1.  
\end{equation}
From Equations (\ref{caseof1}), (\ref{caseof2}), and (\ref{caseof3}),
we have
\begin{equation}\label{caseof4}
\mathcal{G}(s,0,0,1) \ne 2m.  
\end{equation}
As
\begin{align}
& \mathcal{G}(s,1,0,1) =\textit{mex}(\{\mathcal{G}(h,1,0,1):\{h,1,0,1\} \in \textit{move}(s,1,0,1)\} \nonumber \\
& \cup \{\mathcal{G}(s,0,0,1)\}\cup \{\mathcal{G}(s,1,0,0)\}), \nonumber 
\end{align}
according to Equation (\ref{s10002mbb}), relations (\ref{include2mm201}) and (\ref{no2mintb}), and the inequality in (\ref{caseof4}), we have \\
$\mathcal{G}(s,t,u,1)=2m$.
\end{proof}

\begin{flushleft}
	\large{Acknowledgements}\\
	\normalsize{We would like to thank Editage (www.editage.com) for English language editing.}
\end{flushleft}

\end{document}